\newtheorem{theorem}{Theorem}[section]
\newtheorem{lemma}[theorem]{Lemma}
\theoremstyle{definition}
\newtheorem{corollary}{Corollary}[theorem]
\theoremstyle{remark}
\numberwithin{equation}{section}
\begin{document}

\title[About the Polignac's conjecture]{About the Polignac's conjecture}

\author{Douglas Azevedo}
\address{UTFPR-CP. Av. Alberto Carazzai 1640, centro, caixa postal 238,  86300000, Cornelio Procopio, PR, Brasil.}
\email{dgs.nvn@gmail.com}


\subjclass[2010]{Primary  11A41}



\keywords{ primes, gaps,  Polignac's conjecture, convergence of series}

\begin{abstract}
In this note we present a method to bound gaps between infinitely many consecutive prime numbers via the divergence of the series of reciprocals of the prime numbers and a  consequence of a test for convergence of series of positive numbers. 
Using this alternative method we indicate how our results are related to the  Polignac's conjecture .
\end{abstract}

\maketitle

\section{Introduction}

The study of the  gap between consecutive prime numbers, that is, the  investigation of the behavior of the sequence $g_n=p_{n+1}-p_n$, has attracted attention of the  most prominent mathematicians in the area. Here, 
 $p_n$ denotes the $n$th prime number.
Two of the most important problems related to this subject are the twin-prime conjecture and the Polignac's conjecture. The first one states that there exist infinitely many primes for which $g_n=2$ and the second conjecture generalizes the first one, by stating that for any positive even integer $k$, there  exist infinitely many primes for which $g_n=k$. As far as we know, until this moment  both conjectures were still not proved or disproved.  However, several works have contributed to the progress of this subject. Let us indicate some of them.

 In (\cite{gold}) Dan  Goldston, J\'anos Pintz, and Cem Yıldırım (also indicated as GPY)  presented a solution for 
  a long-standing open problem. They proved that there are infinitely many primes for which the gap to the next
prime is as small as we want compared to the average gap between consecutive primes. They showed that 
\begin{equation}\label{GPY}
\liminf_{n\to\infty} \frac{p_{n+1}-p_{n}}{\ln(p_n)}=0.
\end{equation}
There, the approach adopted is usually referred   as the
level of distribution of primes in arithmetic progressions and, with an additional assumption on this level of distribution the showed that
\begin{equation}\label{GPYa}
\liminf_{n\to\infty} p_{n+1}-p_{n}\leq 16.
\end{equation}
Latter, in \cite{gold1} the same authors considerably improved \eqref{GPY}
proving that 
\begin{equation}\label{GPY1}
\liminf_{n\to\infty} \frac{p_{n+1}-p_{n}}{\sqrt{\ln(p_n)}\ln(\ln(p_n))^2}<\infty.
\end{equation}
This result shows that there exist pairs of primes nearly
within the square root of the average spacing.

In 2013, Yitang Zhang \cite{zhang} published his celelbrated paper providing  the first proof of finite gaps between prime numbers. There it is shown that
\begin{equation}\label{zhang}
\liminf_{n\to\infty} p_{n+1}-p_{n}\leq 7.10^7.
\end{equation}
His approach was a refinement of the  work of Goldston, Pintz
and Yildirim on the small gaps between consecutive primes \cite{gold} and 
a major ingredient of the proof is a stronger version of the Bombieri-Vinogradov
theorem \cite{zhang}. A nice exposition of the Zhang's proof can be found in \cite{gran}. 

The improvement of the Zhang's numerical bound on the gaps  was obtained right after. For instance, the work of Polymath8 (\cite{polymath8}) and  Maynard (\cite{maynard}) 
presented  a reduction of Zhang's bound to $4680$ and $600$, respectively.
In particular, in Maynards work the  proofs involved quite different methods 
to Zhang and brought
the upper bound down to 600 using the Bombieri - Vinogradov Theorem (not
Zhang’s stronger alternative) and an improvement on GPY results. We refer to \cite{musson} and references there in for more information about the developments 
of the investigation about gaps of  prime numbers.

 The main feature of this note is a new approach to deal with  gaps of infinitely many consecutive prime numbers.  Our method
 involves the divergence of the series of the reciprocals of the prime numbers and a consequence of a version of a test for convergence of series of positive terms. 
 
  As it is presented at the final of this note,  our method may be used as an alternative  to  deal with  the Polignac's conjecture .

Let us start with the following classical result.  

\begin{lemma}\label{primes}
The series $\sum \frac{1}{p_n}$ diverges.
\end{lemma}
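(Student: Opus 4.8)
The plan is to prove the divergence of $\sum \frac{1}{p_n}$ by the classical Euler-type argument, exploiting the fundamental theorem of arithmetic to relate a product over primes to the harmonic series. First I would argue by contradiction: suppose $\sum \frac{1}{p_n}$ converges. Then there exists an index $k$ such that $\sum_{n > k} \frac{1}{p_n} < \frac{1}{2}$. The idea is to split the positive integers according to whether their prime factors lie among the ``small'' primes $p_1, \dots, p_k$ or involve at least one ``large'' prime $p_n$ with $n > k$.

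Next I would estimate the sum $\sum_{m} \frac{1}{m}$ over those $m$ divisible by at least one large prime. Grouping by the largest (or any) large prime divisor, one gets the bound
\begin{equation}
\sum_{\substack{m \ge 1 \\ m \text{ has a large prime factor}}} \frac{1}{m} \le \sum_{j \ge 1} \left( \sum_{n > k} \frac{1}{p_n} \right)^{j} < \sum_{j \ge 1} \frac{1}{2^{j}} = 1,
\end{equation}
so in particular this contributes only finitely. On the other hand, the integers all of whose prime factors are among $p_1, \dots, p_k$ form a set whose reciprocals sum to $\prod_{i=1}^{k} \left(1 - \frac{1}{p_i}\right)^{-1}$, a finite number. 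Since every positive integer falls into one of these two classes, the full harmonic series $\sum_{m \ge 1} \frac{1}{m}$ would be bounded above by the sum of two finite quantities, hence convergent. This contradicts the well-known divergence of the harmonic series, and the lemma follows.

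The main obstacle, such as it is, is making the combinatorial bookkeeping in the second step precise: one must check that expanding $\left(\sum_{n>k} p_n^{-1}\right)^j$ genuinely dominates the reciprocals of all integers whose prime factorization uses exactly $j$ large prime factors counted with multiplicity, which it does because each such reciprocal $\frac{1}{p_{n_1}\cdots p_{n_j}}$ (with repetitions allowed, factors possibly times a smooth part $\le 1$ in reciprocal) appears among the terms of the expansion. Since this is the standard Erd\H{o}s argument, I would state it compactly rather than belabor the multiplicities; alternatively, one could simply cite it as classical, given that the paper itself labels Lemma~\ref{primes} a ``classical result.''
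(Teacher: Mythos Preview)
The paper does not actually prove this lemma: it introduces it as a ``classical result'' and moves directly to the next statement, with no argument or even a reference attached. So your closing remark---that one could simply cite it---is exactly what the paper does, and there is nothing further to compare.

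That said, the argument you sketch contains a real slip. Your displayed inequality
\[
\sum_{\substack{m \ge 1 \\ m \text{ has a large prime factor}}} \frac{1}{m}\;\le\;\sum_{j\ge 1}\Bigl(\sum_{n>k}\frac{1}{p_n}\Bigr)^{j}<1
\]
is not correct. An integer $m$ with at least one large prime factor also carries an arbitrary smooth cofactor $s$, and infinitely many distinct $m$'s share the same ``large part'' $q$; bounding each $1/m$ by $1/q$ and then matching with a single term of the expansion ignores this multiplicity. (For instance, the multiples of $p_{k+1}$ alone contribute $\tfrac{1}{p_{k+1}}\sum_{j\ge 1}\tfrac{1}{j}$ to the left side.) The repair is immediate and keeps your strategy intact: factor each such $m$ uniquely as $m=sq$ with $s$ smooth and $q>1$ a product of large primes, so that the left side equals
\[
\Bigl(\sum_{s\text{ smooth}}\frac{1}{s}\Bigr)\Bigl(\sum_{q>1\text{ large-only}}\frac{1}{q}\Bigr)=C\cdot D,
\]
where $C=\prod_{i\le k}(1-1/p_i)^{-1}$ is finite and $D<1$ by your geometric-series estimate. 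Then the full harmonic series is $C(1+D)<2C<\infty$, giving the contradiction. Equivalently, skip the splitting and invoke the Euler product $\sum_{m\ge 1}1/m=\prod_p(1-1/p)^{-1}$, which converges as soon as $\sum_p 1/p$ does.
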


The next result is a consequence of the prime number theorem (see \cite[p. 79]{apostol}). 
We will make use of the asymptotic notation $u(x)\sim v(x)$, meaning that
$$\lim_{x\to\infty}\frac{u(x)}{v(x)}=1.$$

\begin{lemma}\label{PNT}
$$p_n\sim n\ln(n).$$
\end{lemma}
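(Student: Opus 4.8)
The plan is to derive this as a direct consequence of the Prime Number Theorem in the form $\pi(x)\sim x/\ln(x)$, where $\pi(x)$ denotes the number of primes not exceeding $x$. The first step is to specialize this relation to $x=p_n$. Since precisely $n$ primes are less than or equal to $p_n$, we have $\pi(p_n)=n$, and the Prime Number Theorem yields
$$\lim_{n\to\infty}\frac{n\ln(p_n)}{p_n}=1,$$
in other words $p_n\sim n\ln(p_n)$.

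The second step is to replace $\ln(p_n)$ by $\ln(n)$. Taking logarithms in $p_n\sim n\ln(p_n)$ gives $\ln(p_n)=\ln(n)+\ln(\ln(p_n))+o(1)$ as $n\to\infty$. Dividing by $\ln(p_n)$ and using that $p_n\to\infty$, so that $\ln(\ln(p_n))/\ln(p_n)\to 0$, we obtain $\ln(n)/\ln(p_n)\to 1$, that is, $\ln(p_n)\sim\ln(n)$.

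The third and final step is to combine the two asymptotic equivalences: from $p_n\sim n\ln(p_n)$ and $\ln(p_n)\sim\ln(n)$ we get $p_n\sim n\ln(n)$, which is the assertion of the lemma.

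I do not anticipate a genuine obstacle here, since the entire arithmetic content is supplied by the Prime Number Theorem, which is being invoked rather than proved. The only points requiring mild care are verifying that $p_n\to\infty$ (so that the iterated logarithm is defined for all large $n$ and the error terms above are indeed negligible) and noting that asymptotic equivalence of positive sequences is preserved under multiplication; both are elementary.
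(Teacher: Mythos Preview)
Your argument is the standard, correct derivation of $p_n\sim n\ln(n)$ from the Prime Number Theorem in the form $\pi(x)\sim x/\ln(x)$, and each of the three steps is sound.

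As for comparison with the paper: the paper does not actually supply a proof of this lemma at all --- it simply refers the reader to \cite[p.~16]{apostol}. So you have in fact gone further than the paper by writing out the details, and your route (specialize the Prime Number Theorem to $x=p_n$, then show $\ln(p_n)\sim\ln(n)$ by taking logarithms) is exactly the argument found in standard references such as Apostol. There is nothing to correct.
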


See, for instance, \cite[p.16]{apostol}, for a proof.

Next we will use $f(x)=o(g(x))$, as $x\to\infty$ to denote 
$$\lim_{x\to\infty}\frac{f(x)}{g(x)}=0.$$

Let us define a class of summable positive sequences that will play an importante role in this note. We will say that a pair
$(\{b_n\},r)$ satisfies the $*$-condition, in which $\{b_n\}$ is a summable sequence of positive terms and $r>0$, if there exists $n_0>0$ for which
$$\frac{b_n}{b_{n+1}}= 1+\frac{r}{n\ln(n)}+o\left(\frac{1}{n\ln(n)}\right),$$
for all $n>n_0$.

The next result is a test for convergence of series of positive terms.

\begin{lemma}\label{LEMA}
Let $(\{b_n\},r)$ be a pair satisfing the $*$-condition, for some $r>0$.
If $\{a_n\}$ is a sequence of positive terms  such that there exists a $n_1>0$ for which
$$\frac{a_n}{a_{n+1}}\geq 1+\frac{r}{n\ln(n)}+o\left(\frac{1}{n\ln(n)}\right),$$
for all $n>n_1$, then $\{a_n\}$ is summable.
\end{lemma}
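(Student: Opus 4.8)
The plan is to compare $\{a_n\}$ directly with the summable sequence $\{b_n\}$ by showing that the ratio $a_n/b_n$ is eventually non-increasing, hence bounded, which forces $a_n = O(b_n)$ and therefore $\sum a_n < \infty$ by comparison. First I would fix $n_2 = \max\{n_0, n_1\}$, where $n_0$ comes from the $*$-condition for $(\{b_n\}, r)$ and $n_1$ from the hypothesis on $\{a_n\}$. For $n > n_2$ I would form the quotient
\begin{equation}\label{quot}
\frac{a_n/b_n}{a_{n+1}/b_{n+1}} = \frac{a_n}{a_{n+1}} \cdot \frac{b_{n+1}}{b_n}.
\end{equation}

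The next step is to analyze the right-hand side of \eqref{quot}. By the $*$-condition, $b_n/b_{n+1} = 1 + \frac{r}{n\ln(n)} + o\!\left(\frac{1}{n\ln(n)}\right)$, so $b_{n+1}/b_n$ is the reciprocal, which by a first-order expansion equals $1 - \frac{r}{n\ln(n)} + o\!\left(\frac{1}{n\ln(n)}\right)$. Multiplying by the hypothesized lower bound $a_n/a_{n+1} \geq 1 + \frac{r}{n\ln(n)} + o\!\left(\frac{1}{n\ln(n)}\right)$ and again expanding to first order, the leading $\frac{r}{n\ln(n)}$ terms cancel, leaving
$$\frac{a_n/b_n}{a_{n+1}/b_{n+1}} \geq 1 + o\!\left(\frac{1}{n\ln(n)}\right).$$
The subtlety here is that the product of two quantities of the form $1 \pm \frac{r}{n\ln(n)} + o(\cdot)$ produces a cross term of order $\frac{1}{(n\ln n)^2}$, which is absorbed into the $o\!\left(\frac{1}{n\ln(n)}\right)$; I would want to write this cancellation out carefully, since that is exactly the place where a sign or bookkeeping error would break the argument. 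This inequality alone does not immediately say $a_n/b_n$ is non-increasing — the $o(1/(n\ln n))$ term could be slightly negative — so strictly I conclude only that $\log(a_n/b_n) - \log(a_{n+1}/b_{n+1}) \geq -\varepsilon_n$ with $\sum \varepsilon_n < \infty$ (since $\varepsilon_n = o(1/(n\ln n))$ need not be summable!).

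That last point is the main obstacle, and I would resolve it as follows: rather than telescoping a possibly non-summable error, I would use that the hypothesis gives $\frac{a_n}{a_{n+1}} \geq 1 + \frac{r'}{n\ln(n)}$ for any fixed $r'$ with $0 < r' < r$ once $n$ is large enough (absorbing the $o$-term), and similarly $\frac{b_{n+1}}{b_n} \geq 1 - \frac{r''}{n\ln(n)}$ for any $r'' > r$ once $n$ is large; choosing $r' > r''$ impossible — so instead I compare $\{a_n\}$ with a \emph{new} reference sequence. Concretely, pick $r'$ with $0 < r' < r$, define $b_n' $ via the telescoping product of $\bigl(1 + \frac{r'}{k\ln k}\bigr)^{-1}$; one checks $(\{b_n'\}, r')$ satisfies the $*$-condition and, more to the point, $\sum b_n' < \infty$ can be verified directly because $\log b_n' \sim -r' \log\log n \cdot$ — wait, that diverges too slowly. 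The clean fix: since $(\{b_n\},r)$ is \emph{given} to be summable, I instead show $a_n/b_n$ is eventually monotone by noting the hypothesis can be upgraded: for large $n$ the $o$-term in the bound for $a_n/a_{n+1}$ is $\geq -\frac{r-r'}{n\ln n}$ while the $o$-term for $b_{n+1}/b_n$ is $\geq -\frac{r''-r}{n\ln n}$ with $r' < r < r''$ chosen so $r' \geq$ ... the honest statement is that $\frac{a_n/b_n}{a_{n+1}/b_{n+1}} \geq 1 + \frac{r - r''}{n\ln n} + \frac{r'}{n \ln n}\cdot$ — I would simply take $r'$ close enough to $r$ from below and $r''$ close enough from above that $r' - (r'' - r) > 0$, i.e. the combined bound is $\geq 1 + \frac{c}{n\ln n}$ for some $c>0$, making $a_n/b_n$ strictly eventually decreasing, hence bounded, hence $a_n \le C b_n$ and $\sum a_n \le C \sum b_n < \infty$. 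I would carry out the choice of constants $r', r''$ explicitly at the start so the rest is a clean comparison.
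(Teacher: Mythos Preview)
Your core idea is exactly the paper's: once $\frac{a_n}{a_{n+1}} \geq \frac{b_n}{b_{n+1}}$ for large $n$, the sequence $a_n/b_n$ is non-increasing, hence bounded, hence $a_n = O(b_n)$ and $\sum a_n$ converges. The paper simply packages this as a citation of the ``comparison test of the second kind'' (Knopp), so your route is not different---you are just unpacking the cited test.

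Where you run into trouble is in reading the two $o\!\left(\frac{1}{n\ln n}\right)$ terms as \emph{distinct} unspecified functions. In the paper's intended reading, the $o$-term in the hypothesis on $\{a_n\}$ is the \emph{same} function that appears in the $*$-condition for $(\{b_n\},r)$; this is what makes the line
\[
\frac{b_n}{b_{n+1}} = 1 + \frac{r}{n\ln n} + o\!\left(\frac{1}{n\ln n}\right) \leq \frac{a_n}{a_{n+1}}
\]
an honest inequality for $n > \max\{n_0,n_1\}$, and the proof ends immediately. With that reading, your entire detour through $r', r''$ is unnecessary: the quotient $\frac{a_n/b_n}{a_{n+1}/b_{n+1}} \geq 1$ holds exactly, not merely up to an $o$-error.

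Under your (reasonable but not the paper's) interpretation with independent $o$-terms, your final ``clean fix'' actually fails. You bound $\frac{a_n}{a_{n+1}} \geq 1 + \frac{r'}{n\ln n}$ for some $r' < r$ and $\frac{b_{n+1}}{b_n} \geq 1 - \frac{r''}{n\ln n}$ for some $r'' > r$; multiplying gives
\[
\frac{a_n/b_n}{a_{n+1}/b_{n+1}} \geq 1 + \frac{r' - r''}{n\ln n} - \frac{r'r''}{(n\ln n)^2},
\]
and the coefficient is $r' - r''$, not $r' - (r'' - r)$ as you wrote. Since $r' < r < r''$, this coefficient is \emph{negative} regardless of how close you take $r', r''$ to $r$, so you never get the product $\geq 1$. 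In fact, with independent $o$-terms the stated lemma is genuinely delicate (consider $a_n/a_{n+1} = b_n/b_{n+1} - \frac{1}{n\ln n\,\ln\ln n}$), which is further evidence that the paper means the $o$-terms to coincide.
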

\begin{proof}
Since $\sum_{n=1}^{\infty}b_n$ converges and 
$$\frac{b_{n}}{b_{n+1}}= 1+\frac{r}{n\ln(n)}+o\left(\frac{1}{n\ln(n)}\right)\leq \frac{a_n}{a_{n+1}},$$
for all $n>\max\{n_0,n_1\}$, the  comparsion test of second kind (see \cite[p. 114]{knopp}) implies  that
$\sum a_n$ converges.

\end{proof}

An immediate consequence of the previous lemma is the following key result for the main result of this note. The idea is to use the contrapositive argument in Lemma \ref{LEMA}.

\begin{corollary}\label{coro}
Let $\{a_n\}$ be a sequence of positive terms. If $\sum a_n$ diverges
then for every pair $(\{b_n\},r)$ satisfying the $*$-condition and every $n_0>0$ , there exists a $n>n_0$ 
for which 
$$\frac{a_n}{a_{n+1}}< 1+\frac{r}{n\ln(n)}+o\left(\frac{1}{n\ln(n)}\right).$$
\end{corollary}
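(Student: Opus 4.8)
The plan is to prove Corollary \ref{coro} as the direct contrapositive of Lemma \ref{LEMA}, so the argument is essentially a matter of carefully negating the hypothesis and conclusion of that lemma. First I would suppose, for contradiction, that the conclusion of the corollary fails: that is, that there exists a pair $(\{b_n\},r)$ satisfying the $*$-condition and an index $n_0>0$ such that
$$\frac{a_n}{a_{n+1}}\geq 1+\frac{r}{n\ln(n)}+o\left(\frac{1}{n\ln(n)}\right)$$
for all $n>n_0$. (One must be slightly careful about what the negation of the $o(\cdot)$-statement means; I would read the displayed inequality in the corollary in the same uniform sense as in Lemma \ref{LEMA}, namely that the stated inequality holds eventually with a single error term that is $o(1/(n\ln n))$, so that its negation is precisely the eventual reverse inequality used here.)

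Next I would simply invoke Lemma \ref{LEMA} with $n_1:=n_0$: since $(\{b_n\},r)$ satisfies the $*$-condition and $\{a_n\}$ is a sequence of positive terms obeying the ratio bound for all $n>n_1$, the lemma yields that $\{a_n\}$ is summable, i.e.\ $\sum a_n$ converges. This contradicts the hypothesis of the corollary that $\sum a_n$ diverges. Hence no such pair $(\{b_n\},r)$ and index $n_0$ can exist, which is exactly the assertion that for every pair $(\{b_n\},r)$ satisfying the $*$-condition and every $n_0>0$ there is some $n>n_0$ with
$$\frac{a_n}{a_{n+1}}< 1+\frac{r}{n\ln(n)}+o\left(\frac{1}{n\ln(n)}\right),$$
and the proof is complete.

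The only genuine subtlety — and the step I would be most careful with — is the logical handling of the asymptotic error term, since "$\geq$ up to an $o(1/(n\ln n))$ term for all $n>n_1$" and "there exists $n>n_0$ with $<$ up to an $o(1/(n\ln n))$ term" are not naive set-theoretic complements unless one fixes once and for all the intended reading of these expressions. I would add a one-line remark pinning down that reading (the inequalities are to be interpreted with a fixed error function $\varepsilon(n)=o(1/(n\ln n))$), after which the contrapositive goes through verbatim. Everything else is immediate from Lemma \ref{LEMA}, so there is no computational obstacle to speak of.
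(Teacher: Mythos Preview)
Your argument is correct and is exactly the approach taken in the paper: the corollary is obtained as the contrapositive of Lemma~\ref{LEMA}, and you have simply spelled out that contrapositive in detail. Your added remark about fixing the interpretation of the $o(1/(n\ln n))$ term so that the negation is well defined is a worthwhile clarification that the paper leaves implicit.
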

\begin{proof}
It suffices to use the contrapositive of Lemma \ref{LEMA}.
\end{proof}

The main result of this note is as follows. 
\begin{theorem}\label{main}
If there exists a pair $(\{b_n\},r)$ satisfying the $*$-condition  for some $r\geq 2$ then
$$\liminf_{n\to\infty} g_n\leq r.$$
\end{theorem}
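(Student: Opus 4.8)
The plan is to apply Corollary~\ref{coro} to the sequence $a_n=1/p_n$. This is a sequence of positive terms, and by Lemma~\ref{primes} the series $\sum a_n=\sum 1/p_n$ diverges, so the corollary applies to it. Fix the hypothesized pair $(\{b_n\},r)$ satisfying the $*$-condition and set $\varepsilon(n):=\frac{b_n}{b_{n+1}}-1-\frac{r}{n\ln(n)}$, so that $\varepsilon(n)=o\!\left(\frac{1}{n\ln(n)}\right)$ by definition of the $*$-condition. Invoking Corollary~\ref{coro} with this pair and letting $n_0$ run through the positive integers, we obtain an infinite set $S$ of indices for which
\[
\frac{a_n}{a_{n+1}}<1+\frac{r}{n\ln(n)}+\varepsilon(n),\qquad n\in S.
\]

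I would then rewrite the left-hand side in terms of the gap. Since $a_n/a_{n+1}=p_{n+1}/p_n=1+g_n/p_n$, the inequality above is, for $n\in S$, equivalent to $\frac{g_n}{p_n}<\frac{r}{n\ln(n)}+\varepsilon(n)$, i.e.
\[
g_n<\frac{p_n}{n\ln(n)}\Bigl(r+n\ln(n)\,\varepsilon(n)\Bigr),\qquad n\in S.
\]
Now Lemma~\ref{PNT} gives $\frac{p_n}{n\ln(n)}\to 1$, while $n\ln(n)\,\varepsilon(n)\to 0$ because $\varepsilon(n)=o(1/(n\ln(n)))$; hence the right-hand side tends to $r$ as $n\to\infty$ along $S$. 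In particular, for every $\delta>0$ all but finitely many $n\in S$ satisfy $g_n<r+\delta$.

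Since $S$ is infinite, for each $\delta>0$ there are infinitely many $n$ with $g_n<r+\delta$, and therefore $\liminf_{n\to\infty}g_n\le r+\delta$. Letting $\delta\to 0^{+}$ yields $\liminf_{n\to\infty}g_n\le r$, as claimed. The hypothesis $r\ge 2$ is not used in the estimates; since $g_n\ge 2$ for all $n\ge 2$, it only guarantees that the asserted bound $\liminf_{n\to\infty}g_n\le r$ is consistent rather than self-contradictory.

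Granting the earlier lemmas, the argument is short; the one point that needs care is the handling of the error terms — specifically, reading the $o(1/(n\ln(n)))$ in the conclusion of Corollary~\ref{coro} as the single fixed function $\varepsilon$ attached to the chosen pair, and verifying that $n\ln(n)\,\varepsilon(n)\to0$ is what survives after multiplying through by $p_n$ (it does, precisely because $p_n/(n\ln(n))\to1$). The genuinely hard part is not in the theorem at all but in any attempt to use it: one must produce a pair $(\{b_n\},r)$ satisfying the $*$-condition with $r\ge2$, and the rigidity of the ratio constraint $\frac{b_n}{b_{n+1}}=1+\frac{r}{n\ln(n)}+o(1/(n\ln(n)))$ together with summability of $\{b_n\}$ makes the existence of such a pair the real obstacle.
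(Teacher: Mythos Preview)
Your proof is correct and follows essentially the same route as the paper's: apply Corollary~\ref{coro} to $a_n=1/p_n$, rewrite $a_n/a_{n+1}=p_{n+1}/p_n=1+g_n/p_n$, and use $p_n\sim n\ln(n)$ from Lemma~\ref{PNT} to conclude $\liminf g_n\le r$. Your handling of the error terms (fixing $\varepsilon$ explicitly, tracking $n\ln(n)\,\varepsilon(n)\to0$, and finishing with a $\delta$-argument) is in fact more careful than the paper's, which simply substitutes $n\ln(n)$ for $p_n$ and reads off the bound.
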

\begin{proof}
Suppose that there exists a pair $(\{b_n\},r)$ satisfying the $*$-condition with $r\geq 2$.
From Lemma \ref{primes} and Corollary \ref{coro} we have that 
$$\frac{p_{n+1}}{p_{n}}< 1+\frac{r}{n\ln(n)}+o\left(\frac{1}{n\ln(n)}\right),$$
for infinitely many values of $n$.  That is,
$$g_n< p_n\left(\frac{r}{n\ln(n)}+o\left(\frac{1}{n\ln(n)}\right)\right),$$
for infinitely many values of $n$. 

By Lemma \ref{PNT}, $n\sim n\ln(n)$, hence
$$g_n< n\ln(n)\left(\frac{r}{n\ln(n)}+o\left(\frac{1}{n\ln(n)}\right)\right),$$
for infinitely many values of $n$. Therefore
$$\liminf_{n\to\infty} g_n\leq r.$$

\end{proof}

\section{Final remarks}

In this note we have presented an alternative and simple  method to deal with the Polignac's conjecture.  In particular,  Theorem \ref{main} indicates that
if one find a pair $(\{b_n\},r)$ satisfying the $*$-condition with $r= 2$ then  the twin-prime conjecture would be true. More generally  if, for each $r\geq 2$ even, one finds a pair   $(\{b_n\},r)$ satisfying the $*$-condition
then  the Polignac's conjecture would be  true.

\end{document}